\numberwithin{equation}{section} \pagestyle{myheadings}
\newtheorem{theorem}{Theorem}[section]
\newtheorem{lemma}[theorem]{Lemma}
\newtheorem{definition}[theorem]{Definition}
\newenvironment{proof}{\paragraph{Proof.}}{\hfill $\square$\\}
\newenvironment{proof*}{\paragraph{Proof.}}{}
\begin{document}

\title{Weak Solutions of a Hyperbolic-Type Partial Dynamic Equation in Banach Spaces}
\author{Ahmet Yantir$^\dag$ and Duygu Soyo\u{g}lu$^\ddag$\\[3mm]
\small (\dag) Department of Mathematics, Ya\c{s}ar University, \.{I}zmir, Turkey\\
\small e-mail: {\tt ahmet.yantir@yasar.edu.tr}\\[2mm]
\small phone: +903234115107 \ fax: +902323745474\\
\small (\ddag) Department of Mathematics, \.{I}zmir University of Economics, \.{I}zmir, Turkey\\
\small e-mail: {\tt duygusoyoglu@gmail.com}}
\date{}
\maketitle

\begin{abstract}
In this article, we prove an existence theorem regarding the weak
solutions to the hyperbolic-type partial dynamic equation
\begin{equation*}\begin{array}{l}
 z^{\Gamma\Delta}(x,y)=f(x, y, z(x, y)), \\
 z(x, 0)=0, \ \ \ z(0, y)=0 \\
 \end{array}, \ \ x\in\mathbb{T}_1, \ \ y\in \mathbb{T}_2\end{equation*}
in Banach spaces. For this purpose, by generalizing the definitions and results of Cicho\'{n} \emph{et.al.} we develop weak partial derivatives, double integrability and the mean value
results for double integrals on time scales.  DeBlasi measure of weak noncompactness  and Kubiaczyk's fixed point theorem for the weakly sequentially continuous mappings  are the  essential tools to prove the main
result.

\emph{Keywords}: Hyperbolic partial dynamic equation; Banach space; measure of
weak noncompactness, time scale.\\

\emph{MSC Subject Classification}: 34G20, 34N05, 35L10, 35R20, 47N20
46B50.
\end{abstract}
\section{Introduction}
The \emph{time scale} which unifies the discrete and continuous
analysis was initiated by Hilger \cite{Hilger1}. Henceforth, the equations
which can be described by continuous and discrete models are
unified as "dynamic equations". Nevertheless, the theory of dynamic
equations does not provide only a unification of continuous and discrete
models. It also gives an opportunity to study some difference schemes
based on variable step-size such as $q$-difference (quantum) models under the frame of dynamic equations.
The landmark studies in the theory of dynamic equations are
collected in the books by Bohner and Peterson \cite{Book1, Book2}.

Since the difference and differential equations are also studied
in infinite dimensional Banach spaces
\cite{ravi2,cellina,cichon3,cichon2,cichon,cramer,davidovski,gonzalez,kuma,kubi2,kubiaczyk,szufla}, it is reasonable to study dynamic equations in Banach
spaces. The pioneering work on dynamic equations in Banach spaces
is by Hilger \cite{Hilger2}. Nevertheless this area is not sufficiently developed.  Recently, Cicho\'{n} et. al. \cite{cksy} study the existence of
weak solutions of Cauchy dynamic problem. After this work, there
have been some research activities in the theory of dynamic
equations in Banach spaces \cite{peano, cksy2, polonya1}.

On the other hand, the bi-variety calculus on time scales dates back to the landmark articles of Bohner and Guseinov \cite{pd, mi}. Authors study the partial differentiation and multiple integration on time scales respectively.
Jackson \cite{jackson} and  Ahlbrandt and Morian
\cite{ahlbrandt} employ these background for studying some specific kinds of partial
dynamic equations on $\mathbb{R}$. However, there is no result for the partial dynamic equations in
Banach spaces.

The hyperbolic Goursat problem $$u_{xy}=f(x, y, u, u_x, u_y), \ \ u(x,0)=u(0,y)=0, \ \ (x,y)\in V$$ where $V$ is a rectangle containing $(0, 0)$,  has been studied by many authors for years. Picard proved that when $f(x,y,z_1,z_2,z_3)$ is Lipschitz continuous in the $z-$ variable, then the solution exists and unique \cite{darboux, kamke}. The existence of solutions when $f$ is independent from $z_2$ and $z_3$ was proved by Montel \cite{montel}. Then the sharper results followed by weakening the conditions on $f$ (see \cite{leehey, hart, alex-orli, persson1, persson2}). For an application of a hyperbolic partial differential equations in stochastic process, see \cite{stehlik}.

Motivated by the above studies and the lack of the results for nonlinear partial dynamic equations, in this article, we concentrate on the hyperbolic type partial
dynamic problem
\begin{equation}\label{pde}\begin{array}{l}
 z^{\Gamma_w\Delta_w}(x,y)=f(x, y, z(x, y)), \\
 z(x, 0)=0, \ \ \ z(0, y)=0 \\
 \end{array}, \ \ x\in\mathbb{T}_1, \ \ y\in \mathbb{T}_2\end{equation} in Banach spaces. Here the time scales $\mathbb{T}_1$ and $\mathbb{T}_2$ both include $0$ and the
differential operators $\Gamma_w$ and $\Delta_w$ are weak partial derivative operators with respect to the variables $x$ and $y$ respectively.

We assume that $f$ is Banach space-valued, weakly-weakly sequentially continuous function. We also assume some regularity conditions expressed in terms of DeBlasi measure of weak noncompactness  \cite{blasi} on $f$. We define a weakly sequentially continuous integral operator associated to an integral equation which is equivalent to \eqref{pde}. The existence of a fixed point of such operator is verified by using the fixed point theorem for weakly sequentially continuous mappings given by Kubiaczyk \cite{kubi2}.

\section{Preliminaries and Notations}\label{prelim}
The time scale calculus (and weak calculus) for the Banach space valued functions is created by Cicho\'{n} \emph{et.al.} \cite{cksy,cksy2}. Authors generalize the definitions of Hilger \cite{Hilger1}. On the other hand, the multi-variable time scale calculus is created by Ahlbrandt and Morian \cite{ahlbrandt} and Jackson \cite{jackson}. In this section, we construct the definitions of weak partial derivatives and the weak double integral of a Banach space
valued function defined on $\mathbb{T}=\mathbb{T}_1\times \mathbb{T}_2$.  Also the mean value result of Cicho\'{n}
\emph{et.al.} (see Thm 2.11 of \cite{cksy}) is generalized for the multivariable case.

Before we state the preliminary definitions, we remark the readers about the notations.  Throughout this article, if a function of two variables $f: \mathbb{T}_1\times \mathbb{T}_2\to E$ is considered, by $f^\Gamma(s,t)$, we mean the forward $\Gamma$-derivative with respect to the first variable $s\in \mathbb{T}_1$. Similarly $f^\Delta(s,t)$ stands for the forward $\Delta$-derivative with respect to the second variable $t\in \mathbb{T}_2$. For a function of single variable $f: \mathbb{T}\to E$, the ordinary  notation $f^\Delta(t)$ is used. The similar considerations are also valid for the integrals.

We refer to \cite{cksy} for the weak calculus of functions of single variable defined on a time scale. We only state the core definitions to clarify the weak calculus of functions of several variables defined on product time scale.

Let $(E, ||\cdot||)$ be a Banach space with the supremum norm and $E^*$ be its dual space.

\begin{definition}\label{weakderivative}\emph{\cite{cksy}}
Let $f : \mathbb{T} \rightarrow E$. Then we say that $f$ is
$\Delta$-weak differentiable at $t\in\mathbb{T}$ if there exists
an element $F(t) \in E$ such that for each $x^{*}\in  E^{*}$ the
real valued function $x^{*}f$ is $\Delta$-differentiable at $t$
and ($x^{*}f)^{\Delta} (t) = (x^{*}F)(t)$. Such a function $F$ is
called $\Delta$-weak derivative of $f$ and denoted by $f^{\Delta_w}$.
\end{definition}
\begin{definition}\emph{\cite{jackson}}\emph{(}Partial Differentiability\emph{)}\label{partial}
Let $f:\mathbb{T}=\mathbb{T}_1\times\mathbb{T}_2\rightarrow \mathbb{R}$ be a function and let $(s, t) \in \mathbb{T}^{k}.$ We define $f^{\Gamma}(s,t)$ to be the number (provided that it exists) with the property that given any $\varepsilon > 0$, there exists a neighborhood $N$ of $s$, with $N=(s-\delta,s+\delta) \cap \mathbb{T}_{1}$ for $\delta>0$ such that
$$\left| [f(\sigma(s),t)-f(u,t)]- f^{\Gamma}(s,t)[\sigma(s)-u]\right| \leq \varepsilon \left| \sigma(s)-u \right|$$ for all $u \in N.$ $f^{\Gamma}(s,t)$ is called  the partial delta derivative of $f$ with respect to the variable $s$.

Similarly we define $f^{\Delta}(s,t)$ to be the number (provided that it exists) with the property that given any $\varepsilon > 0$, there exists a neighborhood $N$ of $t$, with $N=(t-\delta, t+\delta) \cap \mathbb{T}_{2}$ for $\delta>0$ such that
$$\left| [f(s,\sigma(t))-f(s,u)]- f^{\Delta}(s,t)[\sigma(t)-u]\right| \leq \varepsilon \left| \sigma(t)-u \right|$$ for all $u \in N.$ $f^{\Delta}(s,t)$ is called  the partial delta derivative of $f$ with respect to the variable $t$.
\end{definition}
Since we have the definitions of weak $\Delta$-derivative and the partial derivatives on time scales, it is reasonable to combine these definitions to construct the definition of weak partial derivative of a Banach space valued function.
\begin{definition}\label{weakpartialderivative}
Let $f : \mathbb{T}_1\times \mathbb{T}_2 \rightarrow E$. Then we say that $f$ is
$\Gamma$-weak partial differentiable at $(s,t)\in\mathbb{T}$ if there exists
an element $F(s, t) \in E$ such that for each $z^{*}\in  E^{*}$ the
real valued function $z^{*}f$ is $\Gamma$ partial differentiable at $(s,t)$
and ($z^{*}f)^{\Gamma} (s,t) = (z^{*}F)(s,t)$. Such a function $F$ is
called $\Gamma$-weak partial derivative of $f$ and denoted by $f^{\Gamma_
w}$.

Similarly, $f$ is said to be $\Delta$-weak partial differentiable at $(s,t)\in\mathbb{T}$ if there exists
an element $F(s, t) \in E$ such that for each $z^{*}\in  E^{*}$ the
real valued function $z^{*}f$ is $\Delta$ partial differentiable at $(s,t)$
and ($z^{*}f)^{\Delta} (s,t) = (z^{*}F)(s,t)$. Such a function $F$ is
called $\Delta$-weak partial derivative of $f$ and denoted by $f^{\Delta_
w}$.
\end{definition}
\begin{definition}\label{weakcauchy}
If $F^{\Gamma_w}(s,t) = f(s,t)$ for all $(s,t)$, then we define $\Gamma$-weak
Cauchy integral by
$$(C_w)\int^{s}_{a}f(\tau,t)\Gamma\tau=F(s, t)-F(a,t).$$
\end{definition}
The Riemann, Cauchy-Riemann, Borel and Lebesgue integrals on time scales for the Banach space-valued functions are defined by Aulbach \emph{et. al.} \cite{aulbach}. Since the weak Cauchy integral is defined by means of weak anti-derivativatives, the space of weak integrable functions is too restricted. Therefore it is conceivable to define the weak Riemann integral.
\begin{definition}\label{finer}\emph{\cite{cksy}}
Let $P = \{a_{0},a_{1},\cdots,a_{n}\}$ be a partition of the
interval $[a,b]$. P is called $finer$ than $\delta>0$ either $\mu_{\mathbb{T}}([a_{i-1},a_{i}])\leq \delta$ or
 $\mu_{\mathbb{T}}([a_{i-1},a_{i}])> \delta$ only if $a_{i}=\sigma(a_{i-1})$, where $\mu_{\mathbb{T}}$ denotes the time scale measure.
\end{definition}
\begin{definition}\emph{(}Riemann Double Integrability\emph{)}\label{riemdouble}
A Banach space valued-function $f: [a, b]\times [c, d]\to E$ is
called weak Riemann double integrable if there exists $I\in E$
such that for any $\varepsilon>0$ there exists a positive number
$\delta$ with the following property:  For any partition
$P_1=\{a_0, a_1, \cdots, a_n\}$ of $[a, b]$ and
$P_2=\{c_0, c_1, \cdots, c_n\}$ of $[c, d]$ which are
finer than $\delta$ and the set of points $s_j\in [a_{j-1}, a_j)$
and $t_j\in[c_{j-1},c_j)$ for $j=1, 2, \cdots, n$ one has
$$\left|z^{*}(I)-\sum_{j=1}^{n}z^{*}(f(s_{j},t_{j}))\mu_{\mathbb{T}}([a_{j-1}, a_j)\times[c_{j-1}c_j))\right|\leq \ \varepsilon,   \text{ for all } z^{*}\in E^{*}.$$
The uniquely determined function $I$ is called weak Riemann double integral $f$ and
denoted by
$$I=(\mathcal{R}_w)\int\int_{[a,b]\times [c,d]}f(s,t)\Delta t\Gamma s.$$
\end{definition}
Using Theorem 4.3 of Guseinov \cite{Guseinov} and regarding the definition of weak Cauchy and Riemann integrals, it can be remarked that every Riemann weak integrable function is Cauchy weak integrable and therefore these two integrals coincide.

The measure of weak noncompactness which is developed by DeBlasi \cite{blasi} is the fundamental tool in our main result. The regualrity conditions on the nonlinear term $f$ is expressed in terms of measure of weak noncompactness. Let $A$ be a bounded nonempty
subset of $E$. The measure of weak noncompactness $\beta (A)$ is
defined by
$$\beta(A)=\inf\{t>0: \text{there exists} \ C\in K^\omega \ \text{such that} \ A\subset C+t B_1\}$$
where $K^\omega$ is the set of weakly compact subsets of $E$ and
$B_1$ is the unit ball in $E$.

We make use of the following properties of the measure of weak
noncompactness $\beta$. For bounded nonempty subsets $A$ and $B$
of $E$,
\begin{itemize}
\item[(1)] If $A\subset B$ then $\beta (A)\le \beta (B)$,
\item[(2)] $\beta (A)=\beta (\bar {A^w})$, where $\bar {A^w}$ denotes
the weak closure of $A$,
\item[(3)] $\beta (A)=0$ if and only if $A$ is relatively weakly
    compact,
\item[(4)] $\beta (A\cup B)=\max \left\{ {\beta (A),\beta (B)}
\right\}$,
\item[(5)] $\beta (\lambda A)=\vert \lambda \vert \beta (A)$ ($\lambda
\in R)$,
\item[(6)] $\beta (A+B)\le \beta (A)+\beta (B)$,
\item[(7)] $\beta (\overline{\text{conv}} (A))=\beta (\text{conv}
(A))=\beta (A)$, where $\text{conv}(A)$
    denotes the convex hull of $A$.
\end{itemize}
If $\beta$ is an arbitrary set function satisfying the above properties \emph{i.e.}, if $\beta$ is an axiomatic measure of weak noncompactness,
then the following lemma is true. \begin{lemma}\label{E_1} If
$||E_1||=\sup\{||x||: x\in E_1\}<1$ then $$\beta(E_1+E_2)\leq
\beta(E_2)+||E_1||\beta(K(E_2, 1)),$$ where $K(E_2,
1)=\{x:d(E_2,x)\leq 1\}$.
\end{lemma}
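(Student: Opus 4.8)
The plan is to obtain the estimate from the axioms (1)--(7) of the measure of weak noncompactness alone, by means of one elementary set inclusion. Set $\lambda := \|E_1\| = \sup\{\|x\| : x \in E_1\}$, so $\lambda \in [0,1)$ by hypothesis; since $\|x\| \le \lambda$ for every $x \in E_1$, the set $E_1$ is contained in $\lambda B_1$.

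The key step I would carry out first is to record the chain of inclusions
\[
E_1 + E_2 \;\subseteq\; E_2 + \lambda B_1 \;\subseteq\; (1-\lambda)E_2 + \lambda\,(E_2 + B_1) \;\subseteq\; (1-\lambda)E_2 + \lambda\,K(E_2,1).
\]
The first inclusion is immediate from $E_1 \subseteq \lambda B_1$. For the second, given $y \in E_2$ and $b \in B_1$, I would use the algebraic identity $y + \lambda b = (1-\lambda)\,y + \lambda\,(y+b)$, which displays $y + \lambda b$ as a member of $(1-\lambda)E_2 + \lambda(E_2 + B_1)$. The third holds because $E_2 + B_1 \subseteq K(E_2,1)$: any point $y + b$ with $y \in E_2$ and $\|b\| \le 1$ has distance at most $\|b\| \le 1$ from $E_2$.

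Having this inclusion, I would apply the axioms to its two endpoints in order: monotonicity (1) yields $\beta(E_1 + E_2) \le \beta\bigl((1-\lambda)E_2 + \lambda K(E_2,1)\bigr)$; subadditivity (6) gives $\beta\bigl((1-\lambda)E_2 + \lambda K(E_2,1)\bigr) \le \beta\bigl((1-\lambda)E_2\bigr) + \beta\bigl(\lambda K(E_2,1)\bigr)$; and positive homogeneity (5) turns the right-hand side into $(1-\lambda)\beta(E_2) + \lambda\,\beta(K(E_2,1))$. Finally, since $0 \le 1-\lambda \le 1$ and $\beta(E_2) \ge 0$, one has $(1-\lambda)\beta(E_2) \le \beta(E_2)$, and therefore
\[
\beta(E_1 + E_2) \;\le\; \beta(E_2) + \|E_1\|\,\beta\bigl(K(E_2,1)\bigr),
\]
which is the assertion. (Here $K(E_2,1) \supseteq E_2 \neq \emptyset$ is bounded, so $\beta(K(E_2,1))$ is defined.)

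I do not expect any real obstacle. The single non-mechanical point is spotting the splitting $y + \lambda b = (1-\lambda)y + \lambda(y+b)$, which is precisely what allows the ``$\lambda B_1$-thickening'' of $E_2$ to be rewritten in a form on which homogeneity and subadditivity produce the factor $\|E_1\|$ in front of $\beta(K(E_2,1))$; everything else is a direct application of the listed properties. (In fact the argument only uses $\|E_1\| \le 1$, so the strict inequality in the hypothesis is not essential for this lemma.)
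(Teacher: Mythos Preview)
The paper does not supply a proof of this lemma; it is stated immediately after the list of axioms (1)--(7) for $\beta$ as a consequence of those axioms, with no argument given. So there is no ``paper's own proof'' to compare against.

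That said, your argument is correct. The identity $y+\lambda b=(1-\lambda)y+\lambda(y+b)$ is exactly what is needed to pass from the thickening $E_2+\lambda B_1$ to a sum on which (5) and (6) apply, and every inclusion you use is verified. One small remark: the boundedness of $K(E_2,1)$, which you flag, follows because $E_2$ must be bounded for $\beta(E_2)$ to be defined in the first place, and then $K(E_2,1)\subseteq E_2+B_1$ is bounded as well. Your closing observation that only $\|E_1\|\le 1$ is actually used is also accurate.
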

The generalization of Ambrosetti Lemma for
$C(\mathbb{T}_1\times\mathbb{T}_2, E)$ is as follows:
\begin{lemma}\label{ambrosetti}
Let $H \subset C(\mathbb{T}_1\times\mathbb{T}_2, E)$ be a family
of strongly equicontinuous functions. Let $H(x, y) = \{h(x, y) \in
E, h \in H\}$, for $(x, y) \in \mathbb{T}_1\times\mathbb{T}_2$.
Then
$$\beta(H(\mathbb{T}_1\times\mathbb{T}_2))=\sup_{(x, y)\in \mathbb{T}_1\times\mathbb{T}_2}\beta(H(x, y)),$$
and the function $(x, y)\mapsto \beta(H(x, y))$ is continuous on
$\mathbb{T}_1\times\mathbb{T}_2$.
\end{lemma}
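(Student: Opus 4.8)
The plan is to follow the classical Ambrosetti argument, adapting it to the product time scale $\mathbb{T}_1\times\mathbb{T}_2$ and to the DeBlasi measure $\beta$. Write $\mathbb{T}=\mathbb{T}_1\times\mathbb{T}_2$ and set $g(x,y)=\beta(H(x,y))$. First I would establish the continuity of $g$. Fix $(x_0,y_0)\in\mathbb{T}$ and $\varepsilon>0$; by strong equicontinuity of $H$ there is a neighborhood $U$ of $(x_0,y_0)$ in $\mathbb{T}$ such that $\|h(x,y)-h(x_0,y_0)\|<\varepsilon$ for all $h\in H$ and all $(x,y)\in U$. Then $H(x,y)\subset H(x_0,y_0)+\varepsilon B_1$ and, symmetrically, $H(x_0,y_0)\subset H(x,y)+\varepsilon B_1$. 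Applying the monotonicity, subadditivity, and scaling properties (1), (5), (6) of $\beta$, together with $\beta(B_1)\le 1$, gives $|g(x,y)-g(x_0,y_0)|\le 2\varepsilon$ for $(x,y)\in U$; hence $g$ is continuous, and in particular the supremum $M:=\sup_{(x,y)\in\mathbb{T}}g(x,y)$ is attained (note $\mathbb{T}$ is compact, being a closed bounded subset of $\mathbb{R}^2$ if we take $\mathbb{T}_1,\mathbb{T}_2$ compact, as is implicit when $H\subset C(\mathbb{T},E)$ with the sup norm).

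Next I would prove the inequality $\beta(H(\mathbb{T}))\le M$, which is the substantive direction; the reverse inequality $\beta(H(\mathbb{T}))\ge g(x,y)$ for every $(x,y)$ is immediate from $H(x,y)\subset H(\mathbb{T})$ and property (1), so $\beta(H(\mathbb{T}))\ge M$. For the upper bound, fix $\varepsilon>0$. By strong equicontinuity, each point of $\mathbb{T}$ has a neighborhood on which all $h\in H$ oscillate by less than $\varepsilon$; by compactness of $\mathbb{T}$ extract a finite subcover with representative points $(x_1,y_1),\dots,(x_m,y_m)$ and corresponding relatively-open sets $V_1,\dots,V_m$ covering $\mathbb{T}$, so that $H(V_i)\subset H(x_i,y_i)+\varepsilon B_1$. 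Then
$$
H(\mathbb{T})=\bigcup_{i=1}^m H(V_i)\subset \bigcup_{i=1}^m\bigl(H(x_i,y_i)+\varepsilon B_1\bigr).
$$
Using property (4) (behaviour under finite unions) and then property (6) on each term,
$$
\beta(H(\mathbb{T}))\le \max_{1\le i\le m}\beta\bigl(H(x_i,y_i)+\varepsilon B_1\bigr)\le \max_{1\le i\le m}\beta(H(x_i,y_i))+\varepsilon\,\beta(B_1)\le M+\varepsilon.
$$
Letting $\varepsilon\to 0$ yields $\beta(H(\mathbb{T}))\le M$, and combined with the reverse inequality we obtain $\beta(H(\mathbb{T}))=M=\sup_{(x,y)\in\mathbb{T}}\beta(H(x,y))$.

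The only delicate point is the interplay between the finite-union property (4) — which is stated for two sets — and its iteration to $m$ sets, together with the fact that the $H(V_i)$ need not be closed or disjoint; this is harmless since (4) extends to any finite union by induction and $\beta$ is monotone, so covering (rather than partitioning) $\mathbb{T}$ suffices. A secondary point worth a remark is that "strong equicontinuity" must be used in its genuine (norm) form, not merely weak equicontinuity, so that the inclusions $H(V_i)\subset H(x_i,y_i)+\varepsilon B_1$ hold with the norm ball $B_1$; this is exactly what makes the scaling estimate $\beta(\varepsilon B_1)=\varepsilon\beta(B_1)\le\varepsilon$ available. Everything else is a direct transcription of the one-variable Ambrosetti argument to the compact metric space $\mathbb{T}_1\times\mathbb{T}_2$, so I expect no further obstacles.
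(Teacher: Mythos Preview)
Your proposal is correct and is precisely the generalization of the one-variable Ambrosetti argument that the paper invokes; the paper's own proof consists solely of the remark that the result follows by generalizing Lemma~2.9 of \cite{cksy}, and your write-up supplies exactly those details (compactness of $\mathbb{T}_1\times\mathbb{T}_2$, the equicontinuity-plus-finite-cover estimate, and the routine use of properties (1), (4)--(6) of $\beta$).
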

\begin{proof}
The proof directly follows by generalizing the proof of Lemma 2.9 of \cite{cksy}. \end{proof}
\begin{theorem}\label{mvtdouble}(Mean Value Theorem for Double Integrals)
If the function $\phi:\mathbb{T}_{1}\times
\mathbb{T}_{2}\rightarrow E$ is $\Delta$- and $\Gamma$-weak
integrable, then
\[ \iint_{\Omega}\phi(s,t)\Delta t\Gamma s\in \mu_{\mathbb{T}}(\Omega)\cdot\overline{conv}\ \phi(\Omega)\]
where $\Omega$ is an arbitrary subset of $\mathbb{T}_{1}\times
\mathbb{T}_{2}$.
\end{theorem}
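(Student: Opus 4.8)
The plan is to mimic the single-variable mean value result (Theorem~2.11 of \cite{cksy}) and argue by contradiction using the Hahn--Banach separation theorem; the only genuinely new point is that a bounded linear functional commutes with the \emph{double} weak integral. Write $I=\iint_{\Omega}\phi(s,t)\Delta t\Gamma s$ and $m=\mu_{\mathbb{T}}(\Omega)$. If $m=0$, then every Riemann sum in Definition~\ref{riemdouble} vanishes (each cell carries zero measure), so $z^{*}(I)=0$ for all $z^{*}\in E^{*}$, hence $I=0$ and the conclusion holds because $0\cdot\overline{conv}\,\phi(\Omega)=\{0\}$. So assume $m>0$ and suppose, toward a contradiction, that $I/m\notin C:=\overline{conv}\,\phi(\Omega)$.

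First I would record that $C$ is a nonempty, closed, convex subset of $E$: nonemptiness and convexity are automatic, while $\phi(\Omega)$ is norm bounded because for each $z^{*}\in E^{*}$ the scalar function $z^{*}\phi$ is (double) Riemann integrable, hence bounded, so $\phi(\Omega)$ is weakly bounded and the uniform boundedness principle applies. Since the singleton $\{I/m\}$ is compact and disjoint from the closed convex set $C$, the geometric form of the Hahn--Banach theorem yields $z^{*}\in E^{*}$ and $\alpha\in\mathbb{R}$ with
$$z^{*}(x)\le\alpha<z^{*}\!\left(\tfrac{I}{m}\right)\qquad\text{for all }x\in C,$$
and in particular $z^{*}(\phi(s,t))\le\alpha$ for every $(s,t)\in\Omega$. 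Now apply $z^{*}$ to $I$: from Definition~\ref{riemdouble}, for each $\varepsilon>0$ one can choose partitions finer than a suitable $\delta$ and sample points $s_j,t_j$ so that
$$\left|z^{*}(I)-\sum_{j=1}^{n}z^{*}(\phi(s_j,t_j))\,\mu_{\mathbb{T}}\big([a_{j-1},a_j)\times[c_{j-1},c_j)\big)\right|\le\varepsilon .$$
Bounding each $z^{*}(\phi(s_j,t_j))$ by $\alpha$ and using that the cell measures add up to $m$ gives $z^{*}(I)\le\alpha m+\varepsilon$ for every $\varepsilon>0$, hence $z^{*}(I)\le\alpha m<m\,z^{*}(I/m)=z^{*}(I)$, which is absurd. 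Therefore $I/m\in C$, i.e. $I\in m\cdot\overline{conv}\,\phi(\Omega)$, as asserted. (Equivalently, one may note that $z^{*}\phi$ is Riemann double integrable on time scales with $z^{*}(I)=\iint_{\Omega}z^{*}(\phi(s,t))\Delta t\Gamma s$ and then invoke monotonicity of the real-valued double integral against the bound $z^{*}\phi\le\alpha$.)

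The main obstacle is not conceptual but a matter of bookkeeping for the integral over a general set $\Omega$: Definition~\ref{riemdouble} is stated for a rectangle, so to make sense of $\iint_{\Omega}$ one works on an enclosing rectangle with the integrand $\phi\cdot\chi_{\Omega}$, and one must check that the corresponding Riemann sums of cell measures really telescope to $m=\mu_{\mathbb{T}}(\Omega)$ and that the interchange $z^{*}(I)=\iint_{\Omega}z^{*}\phi$ holds uniformly in the sample points. Both facts follow directly from the definition of the weak Riemann double integral together with the coincidence of the weak Cauchy and weak Riemann integrals recorded after Definition~\ref{riemdouble}; once these are in place the separation argument above closes at once.
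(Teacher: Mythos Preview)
Your proof is correct and follows essentially the same route as the paper's: assume the integral lies outside $\mu_{\mathbb{T}}(\Omega)\cdot\overline{\mathrm{conv}}\,\phi(\Omega)$, separate with a functional $z^{*}$, pass $z^{*}$ through the double integral, and use the pointwise bound $z^{*}(\phi(s,t))\le\alpha$ together with monotonicity of the scalar integral to reach a contradiction. You are somewhat more careful than the paper in treating the degenerate case $m=0$, in invoking boundedness of $\phi(\Omega)$ to justify the separation theorem, and in flagging the bookkeeping needed when $\Omega$ is not a rectangle; the paper simply asserts the interchange $z^{*}(w)=\iint_{\Omega} z^{*}(\phi)\,\Delta t\,\Gamma s$ via the weak Cauchy integral and proceeds directly.
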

\begin{proof}
Let $\displaystyle{\iint_{\Omega}\phi(s,t)\Delta t\Gamma s=w}$ and
$\mu_{\mathbb{T}}(R)\cdot\overline{conv}\ \phi(\Omega)=W$. Suppose to the
contrary, that $w\notin W$. By separation theorem for the convex
sets there exists  $z^{*}\in E^{*}$ such that
$$\sup_{\varphi\in W}z^{*}(\varphi)=\alpha<z^{*}(w ).$$ However
$$z^{*}(w)=z^{*}\left((C_w)\iint_{\Omega}\phi(s,t)\Delta t\Gamma s\right)=\iint_{\Omega} z^{*}(\phi(s,t))\Delta t\Gamma s. $$
Moreover, for $(s,t)\in \Omega$,  we have $\phi(s,t)\in \phi(\Omega)$ and
therefore
\[\mu_{\mathbb{T}}(\Omega)\cdot \phi(s,t)\subseteq \mu_{\mathbb{T}}(\Omega)\cdot \overline{conv}\ \phi(\Omega)=W, \ \ \text{i.e.} \ \
\phi(s,t)\subseteq\frac{W}{\mu_{\mathbb{T}}(\Omega)}.\] Hence
\[z^{*}(\phi(s,t))\leq z^{*}\left(\frac{W}{\mu_{\mathbb{T}}(\Omega)} \right)<\frac{\alpha}{\mu_{\mathbb{T}}(\Omega)}.\]
Finally we obtain,
\[z^{*}(w)=\iint_\Omega z^{*}(\phi(s,t))\Delta t\Gamma s\leq \iint_{\Omega}\frac{\alpha}{\mu_{\mathbb{T}}(\Omega)}\Delta t\Gamma s=\frac{\alpha}{\mu_{\mathbb{T}}(\Omega)}\cdot\mu_{\mathbb{T}}(\Omega)=\alpha\]
which is a contradiction.
\end{proof}
In the proof of the main theorem, we make use of the following
fixed point theorem of Kubiaczyk.
\begin{theorem}\emph{\cite{kubi2}}\label{fpt} Let $X$ be a metrizable, locally convex
topological vector space, $D$ be a closed convex subset of $X$,
and  $F$ be a weakly sequentially continuous map from $D$ into
itself. If for some $x\in D$ the implication
\begin{equation}\label{fptcond}
\overline{V}=\overline{\text{conv}}(\{x\}\cup F(V)) \Rightarrow V
\ \text{is relatively weakly compact,}
\end{equation}
holds for every subset $V$ of $D$,  then $F$ has a fixed point.
\end{theorem}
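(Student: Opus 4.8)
The plan is to reduce Theorem~\ref{fpt} to a fixed point theorem on a weakly compact convex set, and then to invoke the weak-topology analogue of the Schauder--Tychonoff theorem for weakly sequentially continuous maps.

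First I would isolate a minimal invariant set. Let $\Sigma$ be the family of all closed convex subsets $C\subseteq D$ with $x\in C$ and $F(C)\subseteq C$. It is nonempty because $D\in\Sigma$ ($D$ is closed and convex and $F$ maps $D$ into itself). Put $C_0=\bigcap_{C\in\Sigma}C$; then $C_0$ is closed and convex, $x\in C_0$, and for each $C\in\Sigma$ we have $F(C_0)\subseteq F(C)\subseteq C$, so intersecting over all $C$ gives $F(C_0)\subseteq C_0$. Hence $C_0\in\Sigma$ is the smallest element of $\Sigma$.

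Next I would verify the set-valued fixed-point identity $C_0=\overline{\text{conv}}\bigl(\{x\}\cup F(C_0)\bigr)$. Since $x\in C_0$, $F(C_0)\subseteq C_0$ and $C_0$ is closed convex, the right-hand side is contained in $C_0$. Conversely, put $V:=\overline{\text{conv}}\bigl(\{x\}\cup F(C_0)\bigr)$; then $V$ is closed and convex, $x\in V$, and $F(V)\subseteq F(C_0)\subseteq V$, so $V\in\Sigma$ and therefore $C_0\subseteq V$. Thus $C_0=V$, and in particular $\overline{V}=V=\overline{\text{conv}}(\{x\}\cup F(V))$ for this choice of $V$. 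Now hypothesis~\eqref{fptcond} applies (with $V=C_0$) and shows that $C_0$ is relatively weakly compact; being closed and convex it is weakly closed, hence weakly compact.

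Finally, $F$ restricts to a weakly sequentially continuous self-map of the weakly compact convex set $C_0$. Because $X$ is metrizable, the Eberlein--\v{S}mulian theorem makes $C_0$ weakly sequentially compact, and on such a set a weakly sequentially continuous self-map has a fixed point (the Arino--Gautier--Penot fixed point theorem, i.e.\ the weak version of Schauder--Tychonoff). That fixed point of $F|_{C_0}$ is the desired fixed point in $D$. The main obstacle is precisely this last step: it rests on the fixed point theorem for weakly sequentially continuous maps on weakly compact convex subsets of a metrizable locally convex space. Proving that from scratch needs a Schauder/Tychonoff-type finite-dimensional approximation argument combined with Eberlein--\v{S}mulian to pass between sequential and topological weak compactness; in the present context it is a known result that may simply be cited. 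By contrast, the set-theoretic reduction in the earlier steps is routine once the minimality of $C_0$ is observed, and the hypothesis is invoked exactly once, for $V=C_0$.
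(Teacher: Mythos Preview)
The paper does not supply a proof of Theorem~\ref{fpt}; the result is quoted from Kubiaczyk \cite{kubi2} and used as a black box in the proof of the main existence theorem in Section~\ref{main-res}. There is therefore no proof in the paper against which to compare your argument.

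For what it is worth, your sketch follows the standard route to such M\"onch-type results: construct the minimal closed convex $F$-invariant set $C_0$ containing $x$, verify $C_0=\overline{\text{conv}}(\{x\}\cup F(C_0))$ by minimality, invoke the hypothesis~\eqref{fptcond} once with $V=C_0$ to obtain weak compactness of $C_0$, and then conclude via a Schauder-type theorem on a weakly compact convex set. The only point deserving care is the final step: the Arino--Gautier--Penot theorem and the Eberlein--\v{S}mulian theorem are customarily stated for Banach spaces, whereas here $X$ is merely a metrizable locally convex space. Kubiaczyk's paper \cite{kubi2} addresses exactly this setting, so citing it for that last step---as you effectively do---is the appropriate move.
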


\section{The Existence Result}\label{main-res} We claim that in the case of weakly-weakly continuous $f$,
finding a weak solution of \eqref{pde} is equivalent to
solving the integral equation \begin{equation}
\label{pdeinteq}z(x,y)=(C_w)\int_0^x\int_0^y f(s,t,z(s,t))\Delta
t\Gamma s, \ \ (s,t)\in
\mathbb{T}_1\times\mathbb{T}_2.\end{equation} To justify the
equivalence, we first assume that a weakly continuous function $z:
\mathbb{T}_1\times\mathbb{T}_2\to E$ is a weak solution of \eqref{pde}. We show that $z$ solves the integral equation
\eqref{pdeinteq}. By the definition of weak Cauchy integral
(Definition \ref{weakcauchy}), we have
\begin{eqnarray*} (C_w)\int_0^y f(x,t,z(x,t))\Delta t=(C_w)\int_0^y
z^{\Gamma\Delta}(x,t)\Delta
t=z^\Gamma(x,y)-z^\Gamma(x,0)=z^\Gamma(x,y)
\end{eqnarray*}
Note that $z^\Gamma(x,0)=0$ since $z(x,0)=0$. If we integrate the
resulting equation on $[0, x]_{\mathbb{T}_1}$, we obtain
\begin{eqnarray*} (C_w)\int_0^x\int_0^y f(s,t,z(s,t))\Delta t\Gamma
s=(C_w)\int_0^x z^{\Gamma}(s,y)\Gamma s=z(x,y)-z(0,y)=z(x,y)
\end{eqnarray*}
which points out that $z$ solves the integral equation
\eqref{pdeinteq}.

Conversely, we assume that $z(x,y)$ is a solution of the integral
equation \eqref{pdeinteq}. For any $z^*\in E^*$, we have
$$(z^*z)(x,y)=z^*\left(\int_0^x\int_0^y f(s,t,z(s,t))\Delta
t\Gamma s\right)$$ and therefore
\begin{eqnarray*}
(z^*z)^\Gamma(x,y)&=&\left(\int_0^x\int_0^y
z^*(f(s,t,z(s,t)))\Delta t\Gamma
s\right)^\Gamma\\
&=&\int_0^y z^*(f(x,t,z(x,t)))\Delta t.
\end{eqnarray*}
Differentiating the last expression we get
\begin{eqnarray*}
(z^*z)^{\Gamma\Delta}(x,y)&=&\left(\int_0^y z^*(f(x,t,z(x,t)))\Delta t\right)^\Delta\\
&=& z^*(f(x,y,z(x,y))).
\end{eqnarray*} By the definition of weak partial derivatives (Definition
\ref{weakpartialderivative}), we obtain $$z^{\Gamma_w\Delta_w}(x,y)=f(x,y,z(x,y)).$$
Clearly the boundary conditions of \eqref{pde} hold. Hence
$z(x,y)$ is the weak solution of \eqref{pde}.

We consider the space of continuous functions
$\mathbb{T}_1\times\mathbb{T}_2\to E$ with its weak topology,
\emph{i.e.}, $$(C(\mathbb{T}_1\times\mathbb{T}_2, E),
w)=\left(C(\mathbb{T}_1\times\mathbb{T}_2,
E),\tau(C(\mathbb{T}_1\times\mathbb{T}_2,
E),C^*(\mathbb{T}_1\times\mathbb{T}_2, E))\right).$$

Let $G:\mathbb{T}_1\times\mathbb{T}_2\times [0, \infty)\to [0,
\infty)$ be a continuous function and nondecreasing in the last
variable. Assume that the scalar integral inequality
\begin{eqnarray}g(x,y)\geq \int_0^x\int_0^y G(s,t,g(s,t))\Delta t\Gamma s\label{integralineq}\end{eqnarray}
has locally bounded solution $g_0(x,y)$ existing on
$\mathbb{T}_1\times \mathbb{T}_2$.

We define the ball $B_{g_0}$ as follows: \begin{eqnarray}
B_{g_0}=&&\hspace{-0,5 cm}\big{\{}z\in
(C(\mathbb{T}_1\times\mathbb{T}_2, E), w): ||z(x,y)||\leq g_0(x,y)
\text{ on } \mathbb{T}_1\times\mathbb{T}_2,
\nonumber\\&&\hspace{-1 cm}||z(x_1,y_1)-z(x_2,y_2)||\leq
\left|\int_0^{x_2}\int_{y_1}^{y_2}G(s,t,g_0(s,t))\Delta t\Gamma
s\right|\nonumber\\&&\hspace{-0,8
cm}+\left|\int_{x_1}^{x_2}\int_0^{y_1}G(s,t,g_0(s,t))\Delta
t\Gamma s\right| \ \text{ for } x_1, x_2\in \mathbb{T}_1 \ \text{
and } y_1, y_2\in\mathbb{T}_2\big{\}}
\end{eqnarray}
Clearly the set $B_{g_0}$ is nonempty, closed, bounded, convex and
equicontinuous.

Assume that a nonnegative, real-valued, continuous function
$(x,y,r)\mapsto h(x,y,r)$ defined on
$\mathbb{T}_1\times\mathbb{T}_2\times\mathbb{R}^+$ satisfies the
following conditions:
\begin{itemize} \item[(H1)] $h(x,y,0)=0$, \item[(H2)]
$z(x,y)\equiv 0 $ is the unique continuous solution of the
integral inequality
$$u(x,y)\leq\int_0^x\int_0^y h(s,t, u(s,t))\Delta t\Gamma s$$
satisfying the condition $u(0,0)$=0.
\end{itemize} We define the integral operator $F:
(C(\mathbb{T}_1\times\mathbb{T}_2, E), w)\to
(C(\mathbb{T}_1\times\mathbb{T}_2, E), w)$ associated to the integral equation \eqref{pdeinteq} by
\begin{equation}\label{intop} F(z)(x,y)=(\mathcal{R}_w)\int_0^x\int_0^y f(s,t,z(s,t))\Delta
t\Gamma s, \ \ x\in\mathbb{T}_1, \ \ y\in\mathbb{T}_2.
\end{equation}
By the considerations presented above, the fixed point of the
integral operator $F$ is the weak solution of \eqref{pde}. Our
main result is as follows: \begin{theorem} Assume that the
function $f: \mathbb{T}_1\times\mathbb{T}_2\times B_{g_0}\to E$
satisfy the following conditions:
\begin{itemize}
    \item[\emph{(C1)}] $f(x,y,\cdot)$ is weakly-weakly
    sequentially continuous for each $(x,y)\in
    \mathbb{T}_1\times\mathbb{T}_2$,
    \item[\emph{(C2)}] For each strongly absolutely continuous
    function $z:\mathbb{T}_1\times\mathbb{T}_2\to E$, $f(\cdot, \cdot,
    z(\cdot,\cdot))$is weakly continuous
    \item[\emph{(C3)}] $||f(x,y,u)||\leq G(x,y,||u||)$ for all $(x,y)\in
    \mathbb{T}_1\times\mathbb{T}_2$ and $u\in E$,
    \item[\emph{(C4)}] For any function $h$ satisfying the conditions (H1) and (H2) $$\beta(f(I_x\times I_y\times W))\leq
    h(x,y,\beta(W))$$for each $W\subset B_{g_0}$ and $I_x\subset
    \mathbb{T}_1, \ I_y\subset \mathbb{T}_2$.
\end{itemize}
Then there exists a weak solution of the partial dynamic problem \eqref{pde}.
\end{theorem}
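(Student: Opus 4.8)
The plan is to produce a fixed point of the integral operator $F$ defined in \eqref{intop} inside the ball $B_{g_0}$ by applying Kubiaczyk's Theorem \ref{fpt} with $D=B_{g_0}$; since $B_{g_0}$ is nonempty, closed, bounded, convex and equicontinuous, and since a fixed point of $F$ is a weak solution of \eqref{pde} by the equivalence established at the start of this section, it suffices to check that $F$ maps $B_{g_0}$ into itself, that $F$ is weakly sequentially continuous on $B_{g_0}$, and that the implication \eqref{fptcond} holds for every $V\subseteq B_{g_0}$.

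For the self-map property, fix $z\in B_{g_0}$ and $(x,y)$; the Mean Value Theorem for double integrals (Theorem \ref{mvtdouble}) gives
$$F(z)(x,y)\in \mu_{\mathbb{T}}\big([0,x)\times[0,y)\big)\cdot\overline{conv}\,f\big([0,x)\times[0,y)\times z([0,x)\times[0,y))\big),$$
so (C3), the monotonicity of $G$ in its last variable, and the integral inequality \eqref{integralineq} for $g_0$ yield $\|F(z)(x,y)\|\le\int_0^x\int_0^y G(s,t,g_0(s,t))\Delta t\Gamma s\le g_0(x,y)$. Writing $F(z)(x_1,y_1)-F(z)(x_2,y_2)$ as a sum of integrals over the two rectangles forming the L-shaped symmetric difference of $[0,x_1)\times[0,y_1)$ and $[0,x_2)\times[0,y_2)$ and bounding each term the same way gives the modulus-of-continuity estimate in the definition of $B_{g_0}$, so $F(z)\in B_{g_0}$. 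For weak sequential continuity, let $z_n\to z$ weakly in $(C(\mathbb{T}_1\times\mathbb{T}_2,E),w)$ with $z_n\in B_{g_0}$. Evaluating at a fixed $(s,t)$ shows $z_n(s,t)\to z(s,t)$ weakly, hence $f(s,t,z_n(s,t))\to f(s,t,z(s,t))$ weakly by (C1), while (C2) supplies the required weak continuity in $(s,t)$ and (C3) the uniform domination $\|f(s,t,z_n(s,t))\|\le G(s,t,g_0(s,t))$. Applying an arbitrary $z^*\in E^*$ and invoking dominated convergence for the scalar double integral on the time scale gives $z^*(F(z_n)(x,y))\to z^*(F(z)(x,y))$ for every $(x,y)$; since $F(B_{g_0})\subseteq B_{g_0}$ is bounded and equicontinuous, this pointwise weak convergence upgrades to $F(z_n)\to F(z)$ in $(C(\mathbb{T}_1\times\mathbb{T}_2,E),w)$.

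The crux is verifying \eqref{fptcond}. Let $V\subseteq B_{g_0}$ satisfy $\overline{V}=\overline{conv}(\{z_0\}\cup F(V))$ for some $z_0\in B_{g_0}$; then $V$ is equicontinuous, so by the generalized Ambrosetti Lemma \ref{ambrosetti} the function $v(x,y):=\beta(V(x,y))$ is continuous on $\mathbb{T}_1\times\mathbb{T}_2$, and properties (2), (4), (7) of $\beta$ combined with $\beta(\{z_0(x,y)\})=0$ give $v(x,y)=\beta(F(V)(x,y))$. To estimate $\beta(F(V)(x,y))$ I would partition $[0,x)\times[0,y)$ into small cells $\Omega_{ij}$, apply Theorem \ref{mvtdouble} on each cell, and use the resulting inclusion
$$F(V)(x,y)\subseteq\sum_{i,j}\mu_{\mathbb{T}}(\Omega_{ij})\,\overline{conv}\,f\big(\Omega_{ij}\times V(\Omega_{ij})\big),$$
where $V(\Omega_{ij})=\{z(s,t):z\in V,\ (s,t)\in\Omega_{ij}\}$; then the monotonicity, subadditivity, homogeneity and convex-hull invariance of $\beta$ together with (C4) give
$$\beta(F(V)(x,y))\le\sum_{i,j}\mu_{\mathbb{T}}(\Omega_{ij})\,\beta\big(f(\Omega_{ij}\times V(\Omega_{ij}))\big)\le\sum_{i,j}\mu_{\mathbb{T}}(\Omega_{ij})\,h\big(s_{ij},t_{ij},\beta(V(\Omega_{ij}))\big).$$
Letting the mesh tend to zero and using the equicontinuity of $V$ (so $\beta(V(\Omega_{ij}))\to v(s_{ij},t_{ij})$) together with the continuity of $h$ and $v$, the right-hand side converges to $\int_0^x\int_0^y h(s,t,v(s,t))\Delta t\Gamma s$, whence $v(x,y)\le\int_0^x\int_0^y h(s,t,v(s,t))\Delta t\Gamma s$ with $v(0,0)=0$. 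Condition (H2) then forces $v\equiv0$, i.e. $\beta(V(x,y))=0$ for all $(x,y)$, so by Lemma \ref{ambrosetti} again $\beta(V(\mathbb{T}_1\times\mathbb{T}_2))=0$; since $V$ is equicontinuous with each $V(x,y)$ relatively weakly compact, an Arzel\`a--Ascoli argument for the weak topology shows $V$ is relatively weakly compact. Thus \eqref{fptcond} holds, Theorem \ref{fpt} produces a fixed point of $F$, and by the equivalence this fixed point is a weak solution of \eqref{pde}. The main obstacle is precisely this partitioning--limiting step: converting the set inclusion for $F(V)$ into the scalar integral inequality for $v$ requires careful cell-by-cell use of the mean value theorem and a uniform passage $\beta(V(\Omega_{ij}))\to v(s_{ij},t_{ij})$, which leans on the equicontinuity of $V$ and the uniform continuity of $v$ and $h$.
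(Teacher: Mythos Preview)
Your proposal is correct and follows essentially the same route as the paper: verify $F:B_{g_0}\to B_{g_0}$, establish weak sequential continuity of $F$, then use a cell-by-cell application of the Mean Value Theorem together with (C4) and the properties of $\beta$ to derive the integral inequality $v(x,y)\le\int_0^x\int_0^y h(s,t,v(s,t))\,\Delta t\,\Gamma s$, after which (H2) and Kubiaczyk's theorem finish the argument. The only differences are organizational: the paper first splits off a small ``tail'' region (controlled via Lemma~\ref{E_1} and (C3)) before partitioning, whereas you partition $[0,x)\times[0,y)$ directly and pass to the limit; and for the weak sequential continuity the paper cites Dobrakov's theorem explicitly, while your dominated-convergence-plus-equicontinuity argument amounts to the same thing.
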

\begin{proof}
By virtue of the condition (C2), the operator $F: B_{g_0}\to
(C(\mathbb{T}_1\times\mathbb{T}_2, E), w)$ is well-defined.  Next
we clarify that the operator $F$ maps $B_{g_0}$ into $B_{g_0}$.
For this purpose first we verify $||F(z)(x,y)||\leq g_0(x,y)$. For
$z(x,y)\in B_{g_0}$, the condition (C3), the monotonicity of $G$
in the last variable and the existence of locally bounded solution
$g_0(x,y)$ of \eqref{integralineq} guarantee that
\begin{eqnarray}
||F(z)(x,y)||&=&\left|\left|\int_0^x\int_0^y f(s,t,z(s,t))\Delta t\Gamma s\right|\right|\nonumber\\
&\leq& \int_0^x\int_0^y ||f(s,t,z(s,t))||\Delta t\Gamma s\nonumber\\
&\leq& \int_0^x\int_0^y G(s,t,||z(s,t)||)\Delta t\Gamma s\nonumber\\
&\leq& \int_0^x\int_0^y G(s,t,||g_0(x,y)||)\Delta t\Gamma s\leq
g_0(x,y).
\end{eqnarray}

Consequently, we claim that
\begin{eqnarray*}
||F(z)(x_1,y_1)-F(z)(x_2,y_2)||&\leq&
\left|\int_0^{x_2}\int_{y_1}^{y_2}G(s,t,g_0(s,t))\Delta t\Gamma s
\right|\\&&\hspace{1
cm}+\left|\int_{x_1}^{x_2}\int_0^{y_1}G(s,t,g_0(s,t))\Delta
t\Gamma s\right|.
\end{eqnarray*}
For all $z^*\in E^*$ with $||z^*||\leq 1$, we have
\begin{eqnarray*}|z^*(f(s,t,z(s,t)))|&\leq&\sup\limits_{z^*\in E^*, ||z^*||\leq 1}|z^*(f(s,t,z(s,t)))|\\&=&||(f(s,t,z(s,t)))||\\&\leq&
G(s,t,||z(s,t)||),\end{eqnarray*}where we use the condition (C3)
for the last step. Hence
\begin{eqnarray*}\left|z^*\left[F(z)(x_1,y_1)-F(z)(x_2,y_2)\right]\right|&&\hspace{-0,5
cm}=\left|z^*\left(\int_0^{x_2}\int_{y_1}^{y_2}f(s,t,z)\Delta
t\Gamma s - \int_{x_1}^{x_2}\int_0^{y_1}f(s,t,z)\Delta t\Gamma
s\right)\right|\\&&\hspace{-2 cm}\leq
\int_0^{x_2}\int_{y_1}^{y_2}|z^*(f(s,t,z))|\Delta t\Gamma
s+\int_{x_1}^{x_2}\int_{0}^{y_1}|z^*(f(s,t,z))|\Delta t\Gamma s
\end{eqnarray*}
Utilizing the condition (C2) we acquire, \begin{eqnarray*}
||F(z)(x_1,y_1)-F(z)(x_2,y_2)||&\leq&
\left|\int_0^{x_2}\int_{y_1}^{y_2}G(s,t,||z(s,t)||)\Delta t\Gamma
s \right|\\&&\hspace{1
cm}+\left|\int_{x_1}^{x_2}\int_0^{y_1}G(s,t,||z(s,t)||)\Delta
t\Gamma s\right|.
\end{eqnarray*}
Since $G$ is nondecreasing in the last variable, the desired
result
\begin{eqnarray*} ||F(z)(x_1,y_1)-F(z)(x_2,y_2)||&\leq&
\left|\int_0^{x_2}\int_{y_1}^{y_2}G(s,t,g_0(s,t))\Delta t\Gamma s
\right|\\&&\hspace{1
cm}+\left|\int_{x_1}^{x_2}\int_0^{y_1}G(s,t,g_0(s,t))\Delta
t\Gamma s\right|
\end{eqnarray*} follows.\\
Next, we substantiate the weakly sequentially continuity of the
integral operator $F$. Let $z_{n}\stackrel{w}{\rightarrow} z$ in
$B_{g_0}$. Then for given $\epsilon >0$ there exists
$N\in\mathbb{N}$ such that for any $n>N$ and $(x,y)\in
I_\alpha\times I_\beta\subset \mathbb{T}_1\times \mathbb{T}_2$, we
have $|z^*z_n(x,y)-z^*z(x,y)|<\epsilon.$ Apparently, from
condition (C1), one can obtain
$$|z^*f(x,y,z_n(x,y))-z^*f(x,y,z(x,y))|\leq\frac{\epsilon}{\alpha\beta}.$$
Hence \begin{eqnarray*} \left|z^*
(F(z_n)(x,y)-F(z)(x,y))\right|&&\hspace{-0,5
cm}=\left|z^*\left(\int_0^{x}\int_{0}^{y}f(s,t,z_n)\Delta t\Gamma
s - \int_{0}^{x}\int_0^{y}f(s,t,z)\Delta t\Gamma
s\right)\right|\\&&\hspace{-0,5
cm}\leq\int_0^{x}\int_{0}^{y}\left|z^*\left(f(s,t,z_n(s,t))-f(s,t,z(s,t))\right)\right|\Delta
t\Gamma s\\&&\hspace{-0,5
cm}\leq\int_0^{\alpha}\int_{0}^{\beta}\left|z^*f(s,t,z_n(s,t))-z^*f(s,t,z(s,t))\right|\Delta
t\Gamma s\\&&\hspace{-0,5
cm}<\int_0^{\alpha}\int_{0}^{\beta}\frac{\epsilon}{\alpha\beta}\Delta
t\Gamma s=\epsilon,
\end{eqnarray*}
(for the first integral inequality see
\cite{Guseinov,Book1,Book2}). Owing to the closedness  of
$\mathbb{T}_1\times \mathbb{T}_2$, is it locally compact Hausdorff
space. Thanks to the result of Dobrakov (see \cite{dobrakov}, Thm
9), $F(z_n)$ converges weakly to $F(z)$ in
$(C(\mathbb{T}_1\times\mathbb{T}_2, E), w)$. Therewith $F$ is
weakly sequentially continuous mapping.

As a result $F$ is
well-defined, weakly sequentially continuous and maps $B_{g_0}$
into $B_{g_0}$.

Now we prove that the fixed point of the integral operator
\eqref{intop} exists by employing Kubiaczyk's fixed point theorem
(Theorem \ref{fpt}).

Let $W\subset B_{g_0}$ satisfying the condition
\begin{equation}\label{thmcond}
W=\overline{\text{conv}}\left(\{z\}\cup F(W)\right)
\end{equation} for some
$z\in B_{g_0}$. We prove that $W$ is relatively weakly compact.
For $(x,y)\in \mathbb{T}_1\times \mathbb{T}_2$, we define
$W(x,y)=\{w(x,y)\in E: w\in W\}$. Resulting from Ambrosetti's
Lemma (Lemma \ref{ambrosetti}), the function $(x,y)\mapsto
w(x,y)=\beta(W(x,y))$ is continuous on $\mathbb{T}_1\times
\mathbb{T}_2$.

Since the integral is $\displaystyle{\int_0^x\int_0^y G(s,t,g(s,t))\Delta t\Gamma s}$ is bounded, there exist $\xi\in \mathbb{T}_1$ and $\eta\in
\mathbb{T}_2$ such that $$\iint\limits_R G(s,t,||z(s,t)||)\Delta
t\Gamma s<\epsilon$$ where $R=\mathbb{T}_1\times
\mathbb{T}_2-([0,\xi]_{\mathbb{T}_1}\times
[0,\eta]_{\mathbb{T}_2})$. We divide the interval
$[0,\xi]_{\mathbb{T}_1}$ into $m$ parts
$$0<s_1<s_2<\ldots<s_m=\xi$$ and $[0,\eta]_{\mathbb{T}_2}$ into $n$
parts $$0<t_1<t_2<\ldots<t_m=\eta$$ in a way that each partition
is finer than $\delta>0$. Also we define
$\mathbb{T}_1^i=[s_i,s_{i+1}]_{\mathbb{T}_1}$ and
$\mathbb{T}_2^j=[t_j,t_{j+1}]_{\mathbb{T}_2}$. By Abrosetti's
Lemma there exists $(\sigma_i, \tau_j)\in
\mathbb{T}_1^i\times\mathbb{T}_2^j=P_{ij}$ such that
$$\beta(W(P_{ij}))=\sup\{\beta(W(s,t)): (s,t)\in P_{ij}\}=w(\sigma_i, \tau_j).$$
On the other hand, for $x>\xi$, $y>\eta$ and for any $w\in W$, we
have\begin{eqnarray*}
F(w)(x,y)&=&\int_0^x\int_0^y f(s,t,w(s,t))\Delta t\Gamma s\\
&=&\int_0^\xi\int_0^\eta f(s,t,w(s,t))\Delta t\Gamma s+\iint_{R_1}
f(s,t,w(s,t))\Delta t\Gamma s.
\end{eqnarray*}
Therefore the mean value theorem (Theorem \ref{mvtdouble}) entails
\begin{eqnarray*}
F(w(x,y))\in\sum_{i=0}^{m-1}\sum_{j=0}^{n-1} \mu_\mathbb{T}(P_{ij})
\overline{\text{conv}}(f(P_{ij}\times W(P_{ij})))+\iint_{R_1}
f(s,t,w(s,t))\Delta t\Gamma s,
\end{eqnarray*}
which has the consequence \begin{eqnarray*}
F(W(x,y))\subset\sum_{i=0}^{m-1}\sum_{j=0}^{n-1}
\mu_\mathbb{T}(P_{ij}) \overline{\text{conv}}(f(P_{ij}\times
W(P_{ij})))+\iint_{R_1} f(s,t,W(s,t))\Delta t\Gamma s.
\end{eqnarray*}
Using (C4), Lemma \ref{E_1} and the properties of measure of weak
noncompactness, we acquire
\begin{eqnarray*}
\beta(F(W(x,y)))\hspace{-0,5cm}&&\leq\sum_{i=0}^{m-1}\sum_{j=0}^{n-1}
\mu_\mathbb{T}(P_{ij}) \beta(\overline{\text{conv}}(f(P_{ij}\times
W(P_{ij}))))+\left|\left|\iint_{R_1} f(s,t,w(s,t))\Delta t\Gamma
s\right|\right|\\
\hspace{-0,5cm}&&\leq\sum_{i=0}^{m-1}\sum_{j=0}^{n-1}
\mu_\mathbb{T}(P_{ij}) \beta(f(P_{ij}\times W(P_{ij})))+\sup_{w\in
W}\iint_{R_1} f(s,t,w(s,t))\Delta t\Gamma s\\
\hspace{-0,5cm}&&\leq\sum_{i=0}^{m-1}\sum_{j=0}^{n-1}
\mu_\mathbb{T}(P_{ij}) h((P_{ij}\times \beta(W(P_{ij}))))+\sup_{w\in
W}\iint_{R_1} f(s,t,w(s,t))\Delta t\Gamma s\\
\hspace{-0,5cm}&&\leq\sum_{i=0}^{m-1}\sum_{j=0}^{n-1}
\mu_\mathbb{T}(P_{ij}) h((P_{ij}\times \beta(W(P_{ij}))))+\sup_{w\in
W}\iint_{R} f(s,t,w(s,t))\Delta t\Gamma s\\
\hspace{-0,5cm}&&\leq\sum_{i=0}^{m-1}\sum_{j=0}^{n-1}
\mu_\mathbb{T}(P_{ij}) h((P_{ij}\times w(\sigma_i,\tau_j))+\sup_{w\in
W}\iint_{R} f(s,t,w(s,t))\Delta t\Gamma u\\
\hspace{-0,5cm}&&\leq\sum_{i=0}^{m-1}\sum_{j=0}^{n-1}
\mu_\mathbb{T}(P_{ij}) h((P_{ij}\times w(\sigma_i,\tau_j))+\sup_{w\in
W}\iint_{R} G(s,t,||w(s,t)||)\Delta t\Gamma s\\
\hspace{-0,5cm}&&\leq\sum_{i=0}^{m-1}\sum_{j=0}^{n-1}
\mu_\mathbb{T}(P_{ij}) h((P_{ij}\times w(\sigma_i,\tau_j))+\epsilon
\end{eqnarray*} Since $\epsilon$ is arbitrary,
\begin{equation}\label{lastineq}
\beta(F(W)(x,y))\leq\int_0^x\int_0^y h(s,t,w(s,t))\Delta t\Gamma
s.
\end{equation} By the condition \eqref{thmcond}, inequality \eqref{lastineq} and
the properties of measure of weak noncompactness
$$w(x,y)\leq\int_0^x\int_0^y h(s,t,w(s,t))\Delta t\Gamma s.$$
The condition (H2) implies that the integral inequality above has
only trivial solution, i.e. $w(x,y)=\beta(W(x,y))=0$ which means
that $W$ ie relatively weakly compact. Thus the condition
\eqref{fptcond} of Theorem \ref{fpt} is substantiated. So the
integral operator $F$ defined by \eqref{intop} has a fixed point
which is actually a weak solution of the hyperbolic partial
dynamic equation \eqref{pde}.
\end{proof}
\subsection*{Acknowledgements} The authors wish to acknowledge the anonymous reviewer for his/her detailed and helpful
comments to the manuscript.

\end{document}